\newcommand{\Z}{\mathbb{Z}}
\newcommand{\R}{\mathbb{R}}
\newcommand{\Q}{\mathbb{Q}}
\newcommand{\EP}{\mathcal{EP}}
\newcommand{\EZ}{\mathcal{E}_\Z}
\newcommand{\EIP}{\mathcal{EIP}}
\newcommand{\EIPNC}{\mathcal{EIP}nc}
\newtheorem{Thm}{Theorem}
\newtheorem{theorem}[Thm]{Theorem}
\newtheorem{proposition}[Thm]{Proposition}
\newtheorem{lemma}[Thm]{Lemma}
\newtheorem*{remark}{Remark}
\begin{document}

\title{Equations in nilpotent groups}
\author{Moon Duchin, Hao Liang, and Michael Shapiro}
\thanks{MD is partially supported by NSF grants DMS-1207106 and DMS-1255442.
MS wishes to acknowledge support from NIH grant K25 AI079404-05.}
\begin{abstract} We show that there exists an algorithm to decide any single equation in the 
Heisenberg group in finite time.  The method works for all two-step nilpotent groups with rank-one commutator, which includes the higher Heisenberg groups.  
We also prove that the decision problem for systems of equations is unsolvable in all 
non-abelian free nilpotent groups.
\end{abstract}
\maketitle

\section{Introduction}

\subsection{The equation problems}

There are several variants on the {\em equation problem} in groups,
studying the solvability of equations of the form $w=1$, where $w$ is
a word written as a product of constants (fixed group elements) and
variables (with values ranging over group elements).  For instance,
consider the Heisenberg group $H(\mathbb{Z})=\langle a,b : [a,b,a],
[a,b,b] \rangle$.  One could seek values of $x,y,z$ satisfying the
equation $ax^2y^{-1}z^3abyb^{10}yz=1$.  The equation problem is
decidable if there is an algorithm for taking any single equation and
answering {\sc yes} or {\sc no} to the question of whether solutions
exist.  Harder than solving a single equation is to solve a system of
simultaneous equations, and harder than that is to solve a system of
equations and inequations, where ``inequations" are of the form $w\neq
1$.  Let us denote those three problems by $\EP_1$, $\EP$, and $\EIP$.
Note that systems of equations without constants always have the
trivial solution, but if inequations are also allowed then it becomes
meaningful to consider such systems, and we can call this decision
problem $\EIPNC$. 

 In the 1960s and 1970s many papers focused on
effective algorithms to produce solutions to particular equations in
particular groups; see \cite{lyndon} for a survey.  The work of
Makanin changed the terms of study when he showed that $\EIP$ is
decidable in free groups.  This work has been much generalized, and
now $\EIP$ is known to be decidable in virtually free groups,
hyperbolic groups, and certain free products and graph products
(including right-angled Artin groups), by combined work of Rips, Sela,
Dahmani-Guirardel, and
Diekert-Lohrey-Muscholl, in various
combinations.  See \cite{dg} for an overview.

Roman'kov was the first to show that it is not the case that $\EP$ is
decidable in all nilpotent groups, by exhibiting a four-step, rank-six
nilpotent group in which it is undecidable \cite{romankov}.  In the
same paper he also proves that $\EIPNC$ in the Heisenberg
group (or in any free two-step nilpotent group) would imply
Hilbert's Tenth Problem over the rationals, a major open problem in number
theory.\footnote{The statement of the theorem seems to say that $\EIP$ in any $N(2,q)$
is {\em equivalent} to Hilbert's 10th over $\Q$, but we only see 
$\EIPNC\Rightarrow{\rm H10}/\Q$
in the proof.}
 This leaves open (and motivates!) the more granular questions
of which equation problems are solvable in which nilpotent groups.  Up
to now, the sharpest results we have found in the literature are as
follows, where $N(p,q)$ represents the free nilpotent group of step
$p$ and rank $q$, and we adopt the standing convention that $p,q\ge 2$.
All of the undecidability results below are accomplished
by showing that equations in the group can be used to encode
diophantine equations and vice versa, then appealing to the negative
solution to Hilbert's Tenth Problem over the integers.

\medskip

{\bf Single equations}
\begin{itemize}
\item There is an algorithm to decide any single equation in one variable in any $N(2,q)$ (Repin);
\item There is an algorithm to decide any single equation in up to two variables in $H(\Z)=N(2,2)$ (Truss);
\item $\EP_1$ is undecidable in $N(3,q)$ if $q$ is sufficiently large (Repin/Truss).
\end{itemize}

{\bf Systems of equations}
\begin{itemize}
\item $\EP$ is undecidable in $N(2,q)$ if $q$ is sufficiently large (Durnev);
\item $\EP$ is undecidable in $N(3,2)$  (Truss);
\item $\EIP$ is undecidable in all $N(p,q)$ (Ersov).
\end{itemize}

Below, we show that $\EP_1$ is decidable in $N(2,2)$, but that $\EP$
 is undecidable in all $N(p,q)$, i.e., all
non-abelian free nilpotent groups.  The result that any single
equation in $H(\Z)$ is decidable is the first such result in any
(non-virtually-abelian) nilpotent group, as far as we know.  This is accomplished by
reducing this decision problem to solving a single diophantine
quadratic equation in many variables, which is already known to be
decidable.  The method works for a larger class of groups, allowing us to 
decide any single equation in any two-step nilpotent group with
rank-one commutator.  This class includes
all  $\Z$-central extensions of free abelian groups, including the higher Heisenberg groups, 
and also allows the possibility of torsion
as will be explained below.

\subsection{Relationship to number theory and logic}

First, some well-known facts from the theory of equations over $\Z$.   We thank Bjorn Poonen for his explanations.

A system of polynomials $f_1 = \cdots = f_n = 0$ to be solved over $\Z$ (or $\Q$)
is equivalent to the single equation ${f_1}^2 + \cdots + {f_n}^2 = 0$, so a system of polynomials may be solved whenever one
can solve a single equation of twice the maximal degree occurring in the system.
Skolem observed that any polynomial equation can converted to system
of at-most-quadratic equations by  introducing extra variables.
(For example, $y^2 = x^5 + 3$ is equivalent to $u = x^2$, $v = u^2$, $y^2 = xv + 3$.)
Putting these together, we see that an arbitrary system of polynomial equations can be converted into
a single polynomial of degree at most four.  Thus Hilbert's Tenth Problem, asking whether an algorithm exists
to decide if an arbitrary system of diophantine polynomials has a solution, can be reduced to finding solutions to single fourth-degree polynomials.

Let us abbreviate $\EZ(d,n)$ for the problem of finding an integer solution to a single polynomial in $\Z[x]$ of degree $d$
in $n$ variables.  Then $\EZ(1,n)$ and $\EZ(2,n)$ are decidable for all $n$; the $d=1$ case is a linear algebra exercise
and the quadratic case was settled in 1972 by Siegel \cite{siegel}.
$\EZ(1,d)$ is decidable for all $d$ by approximating roots
numerically and checking nearby integers.
One can try to play $d$ and $n$ off of each other to find the boundary of decidability:
it is known that $d=4$ and $n=11$ suffice for undecidability, each paired with an appropriately large value in the other parameter.
  $\EZ(3,2)$ is decidable.
 However, $\EZ(3,n)$ is still an open problem for every $n\ge 3$, and it is an open possibility that it is decidable for all $n$.  Most of this
 is covered in the survey \cite{poonen}, discussing the negative solution to Hilbert's Tenth.

The undecidability results mentioned above for  nilpotent groups all proceed by drawing a  connection
 from the group theory to the number theory:  one shows that any system of quadratic diophantine equations can
 be encoded as a system of equations in $N(p,q)$ such that one system has solutions if and only if the other does.
Therefore undecidability of those
 group-theoretic problems follows from the classical results  in number theory and logic.

\subsection{Nilpotent groups}

Define the nested commutator, generalizing the usual commutator $[a,b]$, by
$$[a,b,c,d]=\left[ [a,b,c],d \right]= \left[ \left[ [a,b],c \right],d \right],$$ and so on.
Then a finitely generated group is called $k$-step {\em nilpotent} if all nested commutators with $k+1$
arguments are trivial, but not all those with $k$ arguments.

With this notation, the standard discrete  {\em Heisenberg group} is the 2-step nilpotent group
given by the presentation
$$H(\mathbb{Z})=\langle a,b : [a,b,a], [a,b,b] \rangle.$$
It sits in the short exact sequence
\begin{equation*}
1\rightarrow \Z \rightarrow
H(\Z) \rightarrow \Z^2 \rightarrow 1,
\end{equation*}
where a generator $c$ of $\Z$ is mapped to $[a,b]$
by the inclusion map $i$.
The second map is the projection  $p: H(\Z) \to H(\Z)/i(\Z)$ where the image is identified with $\Z^2$
by mapping $a$ and $b$ to the standard basis vectors.

Recall that $k$-step nilpotent groups have  lower central series
$$1=G_{k+1}\trianglelefteq G_{k} \trianglelefteq \cdots \trianglelefteq G_2 \trianglelefteq G_1=G,$$
where $G_{i+1}=[G_i,G]$, so that in particular $G_2$ is the usual
commutator subgroup of $G$, and $G_{k}$ is central in $G$. Each
quotient $G_i/G_{i+1}$ is an abelian group which is virtually
$\Z^{d_i}$, and we call $d_i$ the rank of that quotient group.
Recall that the indexing is set up so that $[G_i,G_j]\subseteq
G_{i+j}$.

We do not treat the step-one (abelian) case in this paper because all of these decision
problems that we discuss are solvable in abelian groups.

The {\em free nilpotent group} $N(p,q)$ of step $p\ge 2$ and rank $q\ge 2$ is formed by taking 
$H=F_q$, the free group on $q$ generators, letting $H_{p+1}$ be the  
group in its lower central series, and defining $N(p,q)=H/H_{p+1}$.  In other words, 
it has $q$ generators, only the relations required to make it $p$--step nilpotent.  
These are universal in the sense that any finitely generated nilpotent group is a quotient of an appropriate $N(p,q)$.

\subsection*{Acknowledgments}

We would like to thank Pete Clark, Bjorn Poonen, Martin Davis, and Fran\c{c}ois Dorais for 
helpful explanations of the number theory and logic connections.

\section{Two-step groups}

Consider any two-step nilpotent group $G$ with
rank-one commutator. We will now construct Mal'cev coordinates for the group.  This is entirely standard in 
the torsion-free case, but we take some care to handle torsion.

By the fact that the commutator of a two-step group is abelian,  the 
classification of abelian groups, and the rank assumption, the short exact sequence $$1\to [G,G] \to G \to G/[G,G] \to 1$$ becomes 
$$1\to \Z\oplus \left(\Z_{k_{1}}\oplus\cdots\oplus\Z_{k_{s}}\right)   \to G\to
\Z^n\oplus \left( \Z_{l_{1}}\oplus\cdots\oplus\Z_{l_{r}}\right) \to 1,$$ for
appropriate cyclic groups.  Let $\mathbf a = (a_1,\ldots,a_n),\mathbf
b = (b_1,\ldots,b_r)$ be lifts to $G$ of a basis for $\Z^n$ and
generators for $\Z_{l_i}$, respectively.  Also let $c$ and $\mathbf
d=(d_1,\ldots,d_s)$ be generators of $[G,G]$, so that any word in the
commutator subgroup can be written uniquely as $g=c^\alpha
d_1^{\alpha_1}\cdots d_s^{\alpha_s}$.  Then these $a_i$, $b_i$, $c$,
and $d_i$ form a generating set for $G$ that we will call a {\em
  Mal'cev generating set} (or Mal'cev basis), denoted by $\{\mathbf a,
\mathbf b, c, \mathbf d\}$.  Its relations are completely given by
declaring that $c$ and all $d_i$ are central, that each $d_i^{k_i}=1$,
and by freely choosing the exponents in the expression $c^*d_1^*\cdots
d_s^*$ for each of the $[a_i,a_j] (i<j)$, $[b_i,b_j] (i<j)$, and
$[a_i,b_j]$.  By construction, commuting $a$ letters with $b$ letters
only creates more central $c$ and $d$ letters, so  each
element can be written in the form $g=a_1^*\cdots a_n^* b_1^*\cdots
b_r^* c^* d_1^* \cdots d_s^*$, and this is unique if the 
the $b$ and $d$ exponents are reduced with respect to their modularities.
This gives a normal form, i.e., a bijective correspondence between
group elements and tuples in
$\Z^n\oplus\Z_{l_{1}}\oplus\cdots\oplus\Z_{l_{r}} \oplus \Z\oplus
\Z_{k_{1}}\oplus\cdots\oplus\Z_{k_{s}}$.  (Here we 
identify $\Z_m$ with $\{0,\dots,m-1\}\subset \Z$.)  This tuple is called the
{\em Mal'cev coordinates} with respect to the Mal'cev basis.

Suppose $g,g'\in G$, with Mal'cev coordinates $(\mathbf A,\mathbf B, C, \mathbf D)$ and 
$(\mathbf A',\mathbf B', C', \mathbf D')$, respectively.
Let $gg'$ have coordinates $(\mathbf A'',\mathbf B'', C'', \mathbf D'')$.
To put $gg'$ in normal form, we need only commute the $a_i$ and $b_i$ letters into place and reduce the $b_i$ in the appropriate moduli.
Hence we have $A''_i=A_i+A'_i$ and
$B''_i\equiv B_i+B'_i \pmod{l_i}$.  
Next, $$C''=C+C'-\sum_{i<j} \alpha_{ij} A_i'A_j  -\sum_{i<j} \beta_{ij}B_i'B_j  -\sum_{i,j} \gamma_{ij}A_i'B_j  +\sum_{\{i:B_i+B_i'\ge l_i\}} \epsilon_i,$$
where the $\alpha,\beta,\gamma$ are the exponents of $c$ in the appropriate commutator relations and $\epsilon_i$ is the exponent 
of $c$ in $b_i^{l_i}$.  
 Each $D''_i$ is a similar expression, reduced modulo $k_i$.

\begin{lemma}
There is an algorithm to decide whether there are simultaneous solutions to any system 
of diophantine equations consisting of linear equations and a single quadratic equation.
\end{lemma}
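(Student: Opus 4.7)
The plan is to reduce the mixed system to a single quadratic Diophantine equation in possibly fewer variables, and then invoke Siegel's theorem (cited above as the decidability of $\EZ(2,n)$).

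First I would handle the linear part. Given the finite collection of linear Diophantine equations in variables $x_1,\dots,x_n$, standard linear algebra over $\Z$ (for instance, via Smith or Hermite normal form) decides whether the linear system is consistent, and if so produces an explicit parametrization of its integer solution set as an affine lattice
\[
\mathbf{x} = \mathbf{x}_0 + M\mathbf{t}, \qquad \mathbf{t}\in\Z^{k},
\]
where $\mathbf{x}_0\in\Z^n$ is a particular solution and the columns of the integer matrix $M$ span the integer kernel of the linear system. If the linear system is inconsistent, output \textsc{no} and stop.

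Next I would substitute this parametrization into the single quadratic equation $Q(\mathbf{x})=0$. Since $\mathbf{x}$ is an affine function of $\mathbf{t}$, the polynomial $\widetilde Q(\mathbf{t}) := Q(\mathbf{x}_0+M\mathbf{t})$ is a polynomial in $\mathbf{t}$ of degree at most two, with integer coefficients that can be computed in finite time from the data. The original mixed system has an integer solution if and only if $\widetilde Q(\mathbf{t})=0$ has an integer solution $\mathbf{t}\in\Z^k$.

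Finally I would feed $\widetilde Q$ to Siegel's algorithm for $\EZ(2,k)$, which decides whether a single quadratic Diophantine polynomial in any number of variables has an integer zero. This produces the desired decision. There is no real obstacle here: the only potentially subtle point is that $\widetilde Q$ may genuinely be of degree less than two (if the parametrization collapses the quadratic part), but the degenerate cases reduce to linear or constant Diophantine problems and are handled by the same tools. Thus the algorithm is: solve the linear system by Hermite normal form, parametrize its solution set, substitute, and call Siegel.
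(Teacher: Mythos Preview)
Your proposal is correct and follows essentially the same route as the paper: first solve the linear system, parametrize its integer solutions as an affine lattice, substitute into the quadratic to obtain a single quadratic Diophantine equation in the parameters, and invoke Siegel's decidability result for $\EZ(2,n)$. Your use of Hermite/Smith normal form is in fact a cleaner way to handle the linear part than the paper's brief gcd remark, but the strategy is identical.
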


\begin{proof}
Let $\Sigma$ be a  system of linear equations and $Q$ be a quadratic equation in 
the unknowns $y_1,\ldots,y_m$. We can write these out as  
$\Sigma_i: \sum _{j=1}^{m}\alpha_{ij} y_{j} = C_i$ and $Q:f({\mathbf y})=0$, 
where ${\mathbf y}=(y_1,\ldots,y_m)$ and $f$ is a quadratic polynomial.
We will consider the system $\overline\Sigma$ consisting of $\Sigma$ and $Q$.
A solution of $\Sigma$ exists exactly if
 $C_i$ is an integer combination of the $\alpha_{ij}$ for
each $i$.  One easily checks if gcd$(\alpha_{i1},\ldots,\alpha_{im})$ divides  $C_i$. 
If $\Sigma$ has
no solution, then $\overline\Sigma$ has no solution. 
Now suppose $\Sigma$ has a nonempty solution set $S\subset \Z^m$ 
and let ${\mathbf s}\in S$ be a solution.
Consider the corresponding homogeneous system
$\Sigma_0$ whose $i^{\rm th}$ equation is $\sum _{j=1}^{m}\alpha_{ij} y_{j} = 0$, and let $T_0$ and
$S_0$ be its solution spaces in $\R^{m}$ and $\Z^{m}$, respectively, so that $S_0=T_0\cap \Z^m$. 
From the $\alpha_{ij}$, one
can compute an integral basis ${\mathbf v}_1,\ldots, {\mathbf v}_d\in \Z^{m}$ of
$S_0$, so that $S=\{ c_1{\mathbf v}_1 + \cdots + c_d{\mathbf v}_d +{\mathbf s} : c_i\in \Z\}.$
Now $\Sigma$ has a solution if and only if there exists
${\mathbf y}\in S$ such that $f({\mathbf y})=1$, where $f$ is the quadratic polynomial from $Q$.
But $f({\mathbf y})$ is just a quadratic equation in $c_1,\ldots, c_d$, the integer parameters from above,
and a single diophantine quadratic equation is decidable.
\end{proof}

\begin{lemma}
There is an algorithm to decide whether there are simultaneous solutions to any system 
of diophantine equations consisting of arbitrarily many linear equations, arbitrarily many  congruences,
and a single quadratic equation.
\end{lemma}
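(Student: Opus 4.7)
The plan is to reduce this lemma to the previous one by absorbing each congruence into the linear part of the system at the cost of introducing one auxiliary integer variable per congruence.

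Concretely, suppose the system consists of linear equations $\sum_{j=1}^m \alpha_{ij} y_j = C_i$, congruences $\sum_{j=1}^m \beta_{ij} y_j \equiv D_i \pmod{m_i}$, and a single quadratic equation $f(y_1,\ldots,y_m) = 0$. For each congruence I would introduce a fresh integer variable $z_i$ and replace the congruence by the linear equation
\[
\sum_{j=1}^m \beta_{ij} y_j - m_i z_i = D_i.
\]
This gives an enlarged system $\overline\Sigma'$ in the variables $y_1,\ldots,y_m,z_1,\ldots,z_t$ consisting of the original linear equations, the new linear equations replacing the congruences, and the unchanged quadratic polynomial $f$ (which still depends only on the $y_j$).

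The equivalence is immediate: any integer solution to $\overline\Sigma'$ restricts to an integer solution of the original system; conversely, any integer solution $(y_1,\ldots,y_m)$ of the original system extends to a solution of $\overline\Sigma'$ by setting $z_i = \bigl(\sum_j \beta_{ij} y_j - D_i\bigr)/m_i$, which is an integer precisely because the $i$-th congruence holds. Hence $\overline\Sigma'$ is solvable if and only if the original system is.

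Since $\overline\Sigma'$ consists of arbitrarily many linear diophantine equations together with a single quadratic equation, the previous lemma provides an algorithm to decide its solvability, which in turn decides the solvability of the original system. There is no real obstacle here beyond checking the bookkeeping of this reduction; the content is entirely in the preceding lemma (which in turn rests on Siegel's decidability result for a single quadratic diophantine equation).
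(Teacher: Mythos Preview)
Your reduction is correct and is a genuinely different---and slicker---route than the paper's.  The paper instead enumerates the solutions of each congruence $f_k(\mathbf y)\equiv 0\pmod{M_k}$ in the finite set $(\Z_{M_k})^m$, lifts via the Chinese Remainder Theorem to an affine-lattice description of the congruence solution set, and then intersects with the linear solution set before appealing to Siegel.  Your slack-variable trick bypasses all of that bookkeeping.

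One point to watch: you have tacitly assumed the congruences are \emph{linear}.  The paper's proof places no degree restriction on the $f_k$, and its enumeration argument works for congruences of any shape.  Your reduction, by contrast, turns a degree-$d$ congruence into a degree-$d$ equation; if some $f_k$ is quadratic you end up with two or more quadratic equations and can no longer invoke the previous lemma.  This is not idle generality: in the application that follows, the $\mathbf d$-coordinate conditions are quadratic congruences (they have the same form as the $c$-coordinate expression, reduced modulo $k_i$), so the lemma is used at that strength.  If you want to keep your approach, you can first enumerate the finitely many residue classes solving each nonlinear congruence---replacing it by a finite disjunction of \emph{linear} congruences on the variables---and then run your slack-variable reduction on each branch.
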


\begin{proof}
We suppose again that the variables are $y_1,\ldots, y_m$ and in addition to $\Sigma$ and $Q$ from above we add  the system $\Omega$ 
of congruences, with $k^{\rm th}$ equation  $\Omega_k: f_k({\mathbf y}) \equiv 0 \pmod{M_k}$.  
Since $\Z_{M_{k}}$ is finite, we can enumerate all solutions; denote the solution set by $U_k\subset (\Z_{M_k})^m$.
Let us consider the set $U\subset \Z^m$ that simultaneously lifts the $U_k$.  This set is easily constructed 
by the Chinese Remainder Theorem, and it has the form $U=\{r_1{\mathbf u}_1+\cdots+ r_s{\mathbf u}_s +{\mathbf t}\}.$
Therefore the intersection of $U$ and $S$ still has finitely many integer parameters, 
and the full system is decidable as in the previous lemma.
\end{proof}

\begin{theorem} $\EP_1$ is decidable in any two-step nilpotent group with rank-one commutator,
and in particular in $H(\Z)=N(2,2)$.
\end{theorem}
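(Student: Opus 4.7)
The plan is to reduce any single equation $w(x_1,\ldots,x_m)=1$ in $G$ to a diophantine system of the shape handled by Lemma~2. After fixing a Mal'cev basis $\{\mathbf a,\mathbf b,c,\mathbf d\}$ for $G$ as constructed at the start of this section, I would introduce integer parameters for the Mal'cev coordinates $(\mathbf A^{(i)},\mathbf B^{(i)},C^{(i)},\mathbf D^{(i)})$ of each unknown $x_i$, with $B^{(i)}_j\in\{0,\ldots,l_j-1\}$ and $D^{(i)}_j\in\{0,\ldots,k_j-1\}$. The constants appearing in $w$ come with fixed Mal'cev coordinates that contribute only constant terms to the computation.

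The key structural step, proved by induction on the number of factors in $w$ using the explicit multiplication formula displayed above (and its analogue for $\mathbf D$), is that the Mal'cev coordinates of $w$ have the following shape as functions of the parameters: the $\mathbf A$-coordinate is $\Z$-linear in the $\mathbf A^{(i)}$'s; each $B$-coordinate is $\Z$-linear in the $B^{(i)}$'s (mod $l_j$); the $C$-coordinate is $\Z$-linear in the $C^{(i)}$'s plus a quadratic polynomial in the $\mathbf A^{(i)}$ and $\mathbf B^{(i)}$ entries; and each $D$-coordinate is analogous to $C$ but reduced mod $k_j$. This is where the assumptions on $G$ matter: being two-step with rank-one commutator means every correction term arising from collecting letters is central and lives in a single free $\Z$-direction (plus torsion), and each such correction is at most bilinear in the coordinates of the two factors being multiplied, so iteration preserves the degree-two cap. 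Inverse letters $x_i^{-1}$ and fixed integer powers $x_i^\ell$ occurring in $w$ fit in as well, since their Mal'cev coordinates are polynomials of the same degree bounds in those of $x_i$.

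Once the symbolic coordinates of $w$ are in hand, setting them equal to those of the identity yields a system consisting of linear diophantine equations (from $\mathbf A$), linear congruences (from $\mathbf B$), a single quadratic diophantine equation (from $C$), and quadratic congruences (from $\mathbf D$). This is exactly the shape decided by Lemma~2, so the algorithm is simply: compile the system and hand it off.

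The main obstacle is the carry term $\sum_{\{i:B_i+B'_i\ge l_i\}}\epsilon_i$ in the multiplication formula for the $C$-coordinate (and its $\mathbf D$-analogue), which is a step function of the $\mathbf B$-coordinates rather than a polynomial. I would resolve this by case-splitting on the finitely many possible tuples of values $B^{(i)}_j\in\{0,\ldots,l_j-1\}$: there are $\prod_{i,j}l_j$ such cases in all, and in each one the carries accumulated while symbolically computing $w$ are fully determined, so the remaining system in the parameters $\mathbf A^{(i)}, C^{(i)}, \mathbf D^{(i)}$ is genuinely polynomial and of the shape handled by Lemma~2. The original equation $w=1$ has a solution in $G$ if and only if at least one of these finitely many resulting systems does.
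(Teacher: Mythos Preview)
Your proposal is correct and follows essentially the same approach as the paper: compute the Mal'cev coordinates of $w$ symbolically, observe that they yield linear equations from $\mathbf A$, congruences from $\mathbf B$ and $\mathbf D$, and a single quadratic from the $c$-coordinate, then case-split over the finitely many possible $\mathbf B$-tuples to eliminate the non-polynomial carry terms before invoking Lemma~2. The paper phrases the case-split as enumerating the finitely many \emph{solutions} of the $\mathbf b$-coordinate congruences rather than all $\mathbf B$-tuples, but this is a cosmetic difference.
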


\begin{proof}
Let $G$ be a two-step nilpotent group with rank-one commutator and
let $\{\mathbf a, \mathbf b, c, \mathbf d\}$ be a 
Mal'cev generating set of $G$, as above.
Let $w=1$ be an equation in $G$.
We know that $w=1$ if and only if the corresponding Mal'cev coordinate vector is the zero vector.
For the $\mathbf a$, $\mathbf b$, and $\mathbf d$ coordinates, this reduces to 
finitely many linear equations and finitely many congruences.

The $c$-coordinate equation is nearly a quadratic in the input data, except for the $\epsilon$ terms coming
from cases in the $\mathbf b$ values.
However, there are only finitely many possible $\mathbf b$ values, and hence only finitely many 
solutions $\mathbf B$ of the $\mathbf b$-coordinate equations.  For each of these, the $c$-coordinate
equation becomes quadratic.  We can check each of these finitely many systems using 
the algorithm described in the previous lemma.
\end{proof}

\begin{remark}
What happens if we try to run this argument in a two-step group with a
higher-rank commutator, say of rank $d>1$?  Most of the argument goes
through, but instead of one quadratic equation, we get $d$ quadratic
equations.  Although general systems of quadratic equations are undecidable,
this process might produce systems falling into a special subclass
of quadratic systems, and {\em a priori} this subclass could be decidable.  
\end{remark}


Next we show that systems of equations are undecidable in two-step groups, saving the higher-step case for the next
section.
We note that this encoding scheme bears a strong resemblance to Roman'kov's approach
to $\EIPNC$ over $N(2,q)$ in \cite{romankov}.

\begin{theorem}\label{EP-rank2} $\EP$ is undecidable in $N(2,q)$ for all $q$.  In particular, it
  is undecidable in $H(\Z)$. \end{theorem}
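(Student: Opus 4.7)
The plan is to reduce Hilbert's Tenth Problem to $\EP$ in $N(2,q)$, using commutators in the group to simulate products of integer variables. By Skolem's reduction (as recalled in the introduction), it suffices to encode arbitrary systems of at-most-quadratic diophantine equations. Let $a_1,\ldots,a_q$ be the standard generators of $N(2,q)$ and write $c_{12}=[a_1,a_2]$; note $c_{12}$ has infinite order in the (free abelian) commutator subgroup.

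Given diophantine unknowns $x_1,\ldots,x_m$ I would introduce, for each $i$, two group-variable symbols $X_i$ and $Y_i$ together with the constraint equations
\[
[X_i,a_1]=1,\qquad [Y_i,a_2]=1,\qquad [X_i,a_2]\,[a_1,Y_i]^{-1}=1.
\]
Using bilinearity of the commutator mod the center in a $2$-step group, the first equation forces $X_i\in\langle a_1\rangle\cdot Z(G)$, hence $X_i=a_1^{x_i}z_i$ with $z_i$ central, and similarly the second gives $Y_i=a_2^{y_i}w_i$. The third then computes as $c_{12}^{x_i}=c_{12}^{y_i}$, and since $c_{12}$ is of infinite order this forces $x_i=y_i\in\Z$. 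So these three equations per index pin $X_i,Y_i$ down to a single integer parameter $x_i$, recovered as the $a_1$-exponent of $X_i$.

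For each quadratic diophantine equation $f(\mathbf x)=\sum\alpha_{ij}x_ix_j+\sum\beta_i x_i+\gamma=0$ in the system, I would then write the single group equation
\[
\prod_{i,j}[X_i,Y_j]^{\alpha_{ij}}\cdot\prod_i [X_i,a_2]^{\beta_i}\cdot [a_1,a_2]^{\gamma}=1.
\]
By the previous step $[X_i,Y_j]=[a_1^{x_i},a_2^{y_j}]=c_{12}^{x_ix_j}$ and $[X_i,a_2]=c_{12}^{x_i}$, and all factors are central, so the left-hand side collapses to $c_{12}^{f(\mathbf x)}$. Since $c_{12}$ has infinite order the equation is equivalent to $f(\mathbf x)=0$. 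Collecting all these equations together with the pinning equations from the previous paragraph, I obtain a system of equations in $N(2,q)$ whose solution set is in bijection (via $X_i\mapsto x_i$) with the integer solutions of the original diophantine system. Any decision procedure for $\EP$ in $N(2,q)$ would therefore decide Hilbert's Tenth, which is impossible.

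The conceptual content is routine once one writes down the pinning trick; the only point that requires a little care is the linking equation $[X_i,a_2]=[a_1,Y_i]$, which I expect to be the main place to double-check signs and the nondegeneracy of $c_{12}$. Everything else is bilinear bookkeeping in a $2$-step group, and the construction goes through uniformly for every $q\geq 2$, in particular for $H(\Z)=N(2,2)$.
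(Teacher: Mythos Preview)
Your proposal is correct and is essentially the same argument as the paper's: the paper introduces, for each integer unknown, a pair of group variables $y_j,y_j'$ pinned by the three equations $[a,y_j]=1$, $[b,y_j']=1$, $[b,y_j]=[y_j',a]$, exactly matching your $[X_i,a_1]=1$, $[Y_i,a_2]=1$, $[X_i,a_2]=[a_1,Y_i]$, and then encodes each quadratic as a product of commutators equal to a power of $c_{12}$. The only differences are notational (and a different but equivalent choice of the linking equation), so there is nothing to add.
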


\begin{proof}
Begin with an arbitrary system of diophantine quadratic equations $\hat\Sigma$ in
variables $x_1,\dots,x_n$, whose $i^{\rm th}$ equation is
$$\hat\Sigma_i: \alpha_i+\sum_j \beta_{ij} x_j + \sum_{j,k}
\gamma_{ijk}x_jx_k=0.$$ We will encode this in $G=N(2,q)$ with a system
of equations in twice as many variables and $3q$ additional equations.
 For each variable $x_j$ in
the integer system, we will have variables $y_j,y_j'$ in the group.
We will take the generators of $N(2,q)$ to be $a_1,\dots,a_q$ and let
$c_{i} = [a_j,a_k]$, $j<k$, $i=1,\dots, {q \choose 2} =: t$.  
The $\{c_i\}$ are a basis of the free abelian group $G_2\cong \Z^t$.
For notational convenience, we take $a= a_1$,
$b=a_2$, $c = c_1 = [a,b]$.

Observe that the $\{\mathbf a,\mathbf c\}$ form a Mal'cev basis for $G$, i.e., 
an arbitrary element of $N(2,q)$ can be written uniquely in the
form
$$g = a^A b^B a_3^{m_3} \dots a_q^{m_q} c^C c_2^{n_2}\dots c_t^{n_t}.$$

We build a system of equations $\Sigma$ in the group $G$ having $i^{\rm th}$ equation
$$\Sigma_i: [a,b]^{\alpha_i}\cdot \prod_j [a,y_j']^{\beta_{ij}} \cdot
\prod_{j, k} [y_j,y_k']^{\gamma_{ijk}} =1$$ and the additional $3q$
equations $[a,y_j]=1$, $[b,y_j']=1$,  $[b,y_j]=[y_j',a]$ for
all $1\le j \le q$.

Using the normal form, it is immediate that 
 $[a,y_j]=1\implies y_j = a^{r_j}g$ and $[b,y_j']=1\implies y'_j = b^{r'_j}g'$ for some
 $g, g' \in G_2$.  Since $[b,y_j]=[a,y_j']$, we have $r_j = r'_j$. It now follows that
$[a,y_j']=c^{r_j}$ and $[y_j,y_k']=c^{r_jr_k}$. That lets us simplify 
$\Sigma_i$  to $c^{\alpha_i+\sum_j \beta_{ij} r_j +
  \sum_{j,k} \gamma_{ijk}r_jr_k}=1$, which of course is satisfied
exactly if the exponent is zero.  Thus a solution to $\Sigma$ can
produce a solution to $\hat\Sigma$ by letting $x_j=r_j$, and on the
other hand a solution to $\hat\Sigma$ yields a solution to $\Sigma$ by
taking $y_j=a^{x_j}$, $y_j'=b^{x_j}$.  
\end{proof}

\section{Higher-step groups}

In this section we give a machine to convert back and forth between equation
systems in $N(p,q)$ and equation systems over the integers.
The ability to do this hinges on a simple linearity lemma.  This is directly inspired by Truss's
approach to $N(3,2)$ in \cite{truss}.

\begin{lemma} In a $k$-step nilpotent group $G$, we have
linearity in the last and second-to-last arguments of a $k$-fold commutator:
$$[r_1,r_2,\ldots,r_{k-1},st]=[r_1,r_2,\ldots,r_{k-1},s]\cdot [r_1,r_2,\ldots,r_{k-1},t] \ ; $$
$$[r_1,r_2,\ldots,r_{k-2},st,r_k]= [r_1,r_2,\ldots,r_{k-2},s,r_k] \cdot [r_1,r_2,\ldots,r_{k-2},t,r_k].$$
 \end{lemma}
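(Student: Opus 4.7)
The plan is to use the standard commutator expansions
\[
[x, yz] = [x,z] \cdot [x,y]^z = [x,z] \cdot [x,y] \cdot [[x,y], z], \qquad [xy, z] = [x,z]^y \cdot [y,z],
\]
and then exploit the fact that in a $k$-step nilpotent group the subgroup $G_k$ is central: since $[G_k, G] = G_{k+1} = 1$, we have $G_k \subseteq Z(G)$, so $G_k$ is abelian, any conjugation of an element of $G_k$ is trivial, and any commutator $[c,g]$ with $c \in G_k$ vanishes.

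For the first identity, set $u = [r_1, \ldots, r_{k-1}] \in G_{k-1}$, so $[u,s]$ and $[u,t]$ both lie in $G_k$. Expanding gives
\[
[u, st] = [u,t] \cdot [u,s]^t = [u,t] \cdot [u,s] \cdot [[u,s], t].
\]
The last factor is a commutator of an element of $G_k$ with an element of $G$, so it lies in $G_{k+1} = 1$ and vanishes. The remaining two factors lie in the central abelian group $G_k$, so they commute, yielding $[u, st] = [u,s]\cdot [u,t]$.

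For the second identity, set $v = [r_1, \ldots, r_{k-2}] \in G_{k-2}$, so $[v,s],[v,t] \in G_{k-1}$. Applying the expansion one step shallower,
\[
[v, st] = [v,t] \cdot [v,s] \cdot E, \qquad E := [[v,s], t] \in G_k.
\]
Then I would compute $[[v, st], r_k]$ by repeatedly applying $[xy, z] = [x,z]^y [y,z]$ to this three-factor product. Every resulting term either reproduces one of $[[v,s], r_k]$ or $[[v,t], r_k]$, or else is an error term of one of two harmless shapes: (i) a commutator of something in $G_k$ with something in $G$, which lies in $G_{k+1}=1$; or (ii) a conjugate of an element of $G_k$ by an element of $G$, which equals the element itself by centrality. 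A final reordering using centrality of $G_k$ puts the two surviving commutators in the claimed order.

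The main thing to watch is the bookkeeping in the second identity, where expanding a three-factor product across a commutator produces several cross-terms, and each has to be located in the lower central series and shown to be killed by $G_{k+1}=1$ or rendered trivial by centrality of $G_k$. I do not anticipate a deeper obstacle beyond this careful tracking.
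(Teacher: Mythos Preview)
Your proposal is correct and follows essentially the same approach as the paper: expand the commutators via standard product identities, then kill the error terms using $G_{k+1}=1$ and the centrality of $G_k$. The only cosmetic difference is that the paper uses the variants $[x,yz]=[x,y]\cdot[y,[x,z]]\cdot[x,z]$ and $[xy,z]=[x,[y,z]]\cdot[y,z]\cdot[x,z]$ rather than the conjugation forms you chose, but the bookkeeping and the use of the lower central series are identical.
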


\begin{proof}
Here are some basic identities about commutators that hold in all groups:
\begin{equation}\label{1} [x,yz] = [x,y] \cdot [y,[x,z]] \cdot [x,z] \end{equation}
\begin{equation}\label{2} [xy,z] = [x,[y,z]] \cdot [y,z] \cdot [x,z] \end{equation}

Now let $R=[r_1,\ldots,r_{k-1}]\in G_{k-1}$ and we must show $[R,st]=[R,s]\cdot [R,t]$.  We have $[R,st]=[R,s]\cdot [s,[R,t]] \cdot [R,t]$ by \eqref{1}, and the middle term
is trivial because $[G_1,[G_{k-1},G_1]]\subset G_{k+1}=1$.  That proves
the first identity asserted in the lemma.

Now, letting $R=[r_1,\ldots,r_{k-2}]\in G_{k-2}$, we must show $[R,st,r]=[R,s,r]\cdot [R,t,r]$.
By \eqref{1}, $[R,st]=[R,s]\cdot [s,[R,t]]\cdot [R,t]$.  Now let $x=[R,s]$ and $y=[s,[R,t]]\cdot[R,t]$.
Then $[R,st,r]=[xy,r]=[x,[y,r]]\cdot [y,r]\cdot [x,r]$ by \eqref{2}.  But $[x,[y,r]] \in [G_{k-1},G_2]=1$,
so we have shown $[R,st,r]=[y,r]\cdot [x,r].$  But our $y$ is itself a product so we can expand $[y,r]$ with \eqref{2}, obtaining three terms, the only surviving one being
$[R,t,r]$.  Since we already know $[x,r]=[R,s,r]$, and since $G_k$ is abelian, we have shown $[R,st,r]=[R,s,r]\cdot [R,t,r]$.
\end{proof}

\newpage
\begin{proposition}\label{EP-rank3}
$\EP$ is undecidable in all free nilpotent groups of step at least three.
\end{proposition}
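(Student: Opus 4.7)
The plan is to prove the proposition by induction on $p$, with Theorem~\ref{EP-rank2} as the base case ($p=2$) and a computable reduction from $\EP$ in $N(p-1,q)$ to $\EP$ in $N(p,q)$ providing the inductive step for each $p\ge 3$. The reduction exploits the fact that in a free nilpotent group the centre coincides with the top of the lower central series: $Z(N(p,q))=G_p$. This is a standard graded-Lie-algebra observation --- in the free Lie algebra, the bracket $L_k\times L_1\to L_{k+1}$ has trivial left kernel for $1\le k<p$, so any element outside $G_p$ fails to commute with some generator.

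Given an arbitrary system $\Sigma$ of equations $w_i(\mathbf{x})=1$ in $N(p-1,q)$, fix lifts of the constants of $\Sigma$ to $N(p,q)$, denote the resulting words by $\tilde w_i$, and introduce a fresh slack variable $z_i$ for each equation. The system $\tilde\Sigma$ in $N(p,q)$ consists of
$$\tilde w_i(\mathbf{x})=z_i\quad\text{for each }i,\qquad [a_l,z_i]=1\quad\text{for every generator }a_l\text{ and every }i.$$
The commutator equations force each $z_i\in Z(N(p,q))=G_p$, which is exactly the kernel of the projection $\pi:N(p,q)\to N(p-1,q)$. Hence $\tilde\Sigma$ is solvable in $N(p,q)$ iff every $\tilde w_i(\tilde{\mathbf{x}})$ lies in $G_p$ iff $\pi(\tilde{\mathbf{x}})$ solves $\Sigma$ in $N(p-1,q)$; conversely, any solution of $\Sigma$ lifts arbitrarily to $N(p,q)$ and setting $z_i:=\tilde w_i(\tilde{\mathbf{x}})\in G_p$ completes a solution of $\tilde\Sigma$. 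The translation $\Sigma\mapsto\tilde\Sigma$ is manifestly computable, so undecidability of $\EP$ in $N(p-1,q)$ propagates up to $N(p,q)$ and the induction is done.

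The only step requiring real care is the centre computation $Z(N(p,q))=G_p$, which I expect to follow routinely from graded-Lie-algebra arguments but must be verified because any strictly larger centre would undermine the characterisation of $\ker\pi$ by commutator equations alone. This inductive route sidesteps the linearity lemma entirely. An alternative, more constructive plan paralleling Theorem~\ref{EP-rank2} would encode a quadratic diophantine system $\hat\Sigma$ directly in $N(p,q)$ by fixing a $(p-2)$-fold commutator $R$ with $e:=[R,a,b]\in G_p$ nontrivial and writing the single group equation $[R,a,b]^{\alpha_i}\prod_j[R,a,y_j']^{\beta_{ij}}\prod_{j,k}[R,y_j,y_k']^{\gamma_{ijk}}=1$; because $[R,g]\in [G_{p-2},G_2]\subset G_p$ for any $g\in G_2$ (so both $[R,g,y]$ and $[R,y,g]$ vanish for all $y$), the linearity lemma collapses this equation to $e^{\hat\Sigma_i}=1$. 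In that alternative route, the main obstacle would be that the Roman'kov-style linking constraint $[b,y_j]=[y_j',a]$ acquires Hall--Petresco $G_3$-corrections in higher steps (e.g.\ $\binom{r}{2}[a,b,a]$ versus $\binom{r}{2}[a,b,b]$ in $N(3,2)$) that over-constrain $r_j$ unless absorbed by slack variables in the centre or by weaker nested centralizer conditions.
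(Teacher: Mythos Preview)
Your inductive argument is correct and genuinely different from the paper's proof.  The key ingredient, $Z(N(p,q))=G_p$ for $q\ge 2$, is indeed standard (the multi-grading on the free Lie ring shows that for $0\neq x\in L_k$ with $k<p$ there is some generator $a_l$ with $[x,a_l]\neq 0$), and once that is in hand your slack-variable reduction from $\EP$ in $N(p-1,q)$ to $\EP$ in $N(p,q)$ goes through exactly as you wrote it.  The only structural cost is that your proof of the proposition now rests on Theorem~\ref{EP-rank2}, whereas the paper's proof for $p\ge 3$ is logically independent of the $p=2$ case.

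The paper instead encodes an arbitrary quadratic diophantine system directly in $N(p,q)$ for each $p\ge 3$, using the linearity lemma in both of the last two slots of a $p$-fold commutator.  This is essentially the ``alternative plan'' you sketch, with one important simplification you did not anticipate: the paper uses a \emph{single} family of variables $y_j$ (not pairs $y_j,y_j'$) and the system
\[
\Sigma_i:\ [R,b,b]^{\alpha_i}\cdot\prod_j[R,b,y_j]^{\beta_{ij}}\cdot\prod_{j,k}[R,y_j,y_k]^{\gamma_{ijk}}=1,
\]
with \emph{no} auxiliary linking equations at all.  Writing $y_j=a^{A_j}b^{B_j}\cdots$ and expanding bilinearly, the exponent of $g=[R,b,b]$ in $\Sigma_i$ is exactly $\hat\Sigma_i(B_1,\dots,B_n)$, because $g$ spans the one-dimensional multi-degree $(1,p-1,0,\dots,0)$ component of $G_p$ and every other term $[R,s,t]$ with $(s,t)\neq(b,b)$ lands in a different multi-degree.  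So the obstacle you anticipated---Hall--Petresco corrections to a Roman'kov-style constraint $[b,y_j]=[y_j',a]$---never arises, because no such constraint is needed.  Your inductive route is cleaner conceptually and generalizes immediately to any tower of groups with $Z(G)=\ker(G\to H)$; the paper's route is more explicit and self-contained for $p\ge 3$.
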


\begin{proof}
Let $y_j$ be variables in $G=N(p,q)$ with $p\ge 3$, and let $a,b$ be
two of the generators of $G$ in the standard presentation.  Let
$R=[a,b,b,\ldots,b]\in G_{p-2}$, so that $R=a$ if $p=3$.  We will show
that the system $\Sigma$ whose $i^{\rm th}$ equation is
$$\Sigma_i : [R,b,b]^{\alpha_i}\cdot \prod_j [R,b,y_j]^{\beta_{ij}}\cdot \prod_{j,k}  [R,y_j,y_k]^{\gamma_{ijk}} =1$$
has a solution in $G$ if and only if the system $\hat\Sigma$ whose $i^{\rm th}$ equation is
$$\hat\Sigma_i:\alpha_i+\sum_j \beta_{ij} x_j + \sum_{j,k} \gamma_{ijk}x_jx_k$$ has a solution in $\Z$.

If the standard generators of $G$ are $a,b,c_3,\ldots,c_q$, then we
can write $y_j=a^{A_j}b^{B_j}(\prod_{l=3}^q {c_l}^{C_{lj}})h$ for some
$h\in G_2$.  Let us also write $f=[R,b,a]$ and $g=[R,b,b]$, so that
these are among the many basic generators of $G_k$, which is a free
abelian group.  We note that $[R,b,y_j]=f^{A_j}g^{B_j}\prod
[a,b,c_l]^{C_{lj}}$, because $[R,b,q]\in [G_{k-1},G_2]=1$.  Similarly
$$[R,y_j,y_k]=f^{B_jA_k}g^{B_jB_k}\cdot \prod_l [R,b,c_l]^{B_jC_{lk}}\cdot
\prod_{l,m} [R,c_l,c_m]^{C_{lj}C_{mk}}.$$ Since $G_k$ is free abelian,
a word spelled in its basis elements is trivial if and only if all
exponents are zero.  The exponent of $f$ in $\Sigma_i$ is $\sum_j
\beta_{ij}A_j + \sum_{j,k} \gamma_{ijk}B_jA_k$.  The exponent of $g$
in $\Sigma_i$ is $\alpha_i+\sum_j \beta_{ij} B_j + \sum_{j,k}
\gamma_{ijk}B_jB_k$.  The exponents of all other generators of $G_k$
are quadratic polynomials in which every term contains some $C_{ls}$.
We note that the expression coming from the exponent of $g$ is an
the quadratic polynomial appearing in $\hat\Sigma_i$.

A solution to $\Sigma$ in $G$ would produce a solution to $\hat\Sigma$
in $\Z$ by letting $x_j=B_j$; conversely, a solution to $\hat\Sigma$
could be converted to a solution to $\Sigma$ by letting $B_j=x_j$
while all of the $A$ and $C$ values are set to zero.
\end{proof}

Combining Theorem~\ref{EP-rank2} ($p=2$) and Proposition~\ref{EP-rank3} ($p\ge 3$), we have shown that
systems of equations are undecidable in all $N(p,q)$.


\begin{thebibliography}{99}


\bibitem{dg} Fran\c cois Dahmani and Vincent Guirardel, {\em     Foliations for solving equations in groups: free, virtually free and hyperbolic groups},
J. of Topology, 3, no. 2 (2010) 343--404.

\bibitem{lyndon} Roger C.~Lyndon, {\em Equations in groups},  Bol. Soc. Brasil. Mat. 11 (1980), no. 1, 79--102.

\bibitem{poonen} Bjorn Poonen, {\em Undecidability in number theory}, Notices Amer. Math. Soc. 55 (2008), no. 3, 344--350.

\bibitem{romankov} V.A.~Roman'kov, {\em Universal theory of nilpotent groups},  Mat. Zametki 25 (1979), no. 4, 487--495, 635.

\bibitem{siegel} C.S.~Siegel, {\em Zur Theorie der quadratischen Formen},
Nachr. Akad. Wiss. G\"ottingen Math.-Phys. Kl. II 1972, 21--46.

\bibitem{truss}  J.K.~Truss, {\em Equation-solving in free nilpotent groups of class 2 and 3}, Bull. London Math. Soc. {\bf 27} (1995), 39--45.

\end{thebibliography}
\end{document}